\definecolor{verylightblue}{rgb}{.855,.89,1.0}
\definecolor{lightbluegray}{rgb}{.788,.847,0.887}
\def\watermarktext{\put(-10,-680){%\scalebox{4}{\color{lightbluegray}WATERMARK}
\rotatebox{%54.75
90}{\Huge\bf\color{lightbluegray}%{cyan}%{blue}
{\scriptsize arXiv (v1.submitted.19022015)}
}}}
\def\dom{\mathop{\mathrm{Dom}}\nolimits}
\def\im{\mathop{\mathrm{Im}}\nolimits}
\def\N{\mathbb N}
\newcommand{\la}{.} 
\def\kunze{\!\Join\!}
\newcommand{\se}{\!\rtimes\!}
\newcommand{\sd}{\!\ltimes\!}
\def\POI{\mathcal{POI}}
\def\DP{\mathcal{DP}}
\def\ODP{\mathcal{ODP}}
\def\PODI{\mathcal{PODI}}
\def\PO{\mathcal{PO}}
\def\I{\mathcal{I}}
\def\pv#1{\ensuremath{{\sf#1}}}
\newtheorem{theorem}{Theorem}[section]
\newtheorem{corollary}[theorem]{Corollary}
\newtheorem{lemma}[theorem]{Lemma}
\newtheorem{conjecture}[theorem]{Conjecture}
\newcommand{\lastpage}{\addresss}
\newcommand{\addresss}{\small \sf  
\noindent{\sc V\'\i tor H. Fernandes}, 
Departamento de Matem\'atica, 
Faculdade de Ci\^encias e Tecnologia, 
Universidade Nova de Lisboa, 
Monte da Caparica, 
2829-516 Caparica, 
Portugal; 
also: 
Centro de \'Algebra da Universidade de Lisboa, 
Av. Prof. Gama Pinto 2, 
1649-003 Lisboa, 
Portugal; 
e-mail: vhf@fct.unl.pt

\medskip

\noindent{\sc Teresa M. Quinteiro}, 
Instituto Superior de Engenharia de Lisboa, 
Rua Conselheiro Em\'\i dio Navarro 1, 
1950-062 Lisboa, 
Portugal; 
also: 
Centro de \'Algebra da Universidade de Lisboa, 
Av. Prof. Gama Pinto 2, 
1649-003 Lisboa, 
Portugal;
e-mail: tmelo@adm.isel.pt 
}
\title{A note on bilateral semidirect product decompositions of some monoids of order-preserving partial permutations}
\author{V\'\i tor H. Fernandes\footnote{This work was developed within the FCT Project PEst-OE/MAT/UI0143/2014 of CAUL, FCUL, and of Departamento de Matem\'atica da Faculdade de Ci\^encias e Tecnologia da Universidade Nova de Lisboa.}~ and 
Teresa M. Quinteiro\footnote{This work was developed within the FCT Project 
PEst-OE/MAT/UI0143/2014 of CAUL, FCUL, and of Instituto Superior de Engenharia de Lisboa.}
}
\begin{document}

\maketitle

\begin{abstract}
In this note we consider the monoid $\PODI_n$ of all monotone partial permutations on 
$\{1,\ldots,n\}$ and its submonoids $\DP_n$, $\POI_n$ and $\ODP_n$ of all partial isometries, 
of all order-preserving partial permutations and of all order-preserving partial isometries, respectively. 
We prove that both the monoids $\POI_n$ and $\ODP_n$ are quotients of bilateral semidirect products of two of their remarkable submonoids, namely of extensive and of co-extensive transformations.    
Moreover, we show that $\PODI_n$ is a quotient of a semidirect product of $\POI_n$ and the group 
$\mathcal{C}_2$ of order two and, analogously, $\DP_n$ is a quotient of a semidirect product of $\ODP_n$ and $\mathcal{C}_2$. 
\end{abstract}

\medskip

\noindent{\small 2010 \it Mathematics subject classification: \rm 20M20, 20M07, 20M10, 20M35} 

\noindent{\small\it Keywords: \rm transformations, partial isometries, order-preserving, semidirect products, pseudovarieties.} 

\section*{Introduction and preliminaries}

Strongly motivated by automata theoretic ideas, in \cite{Kunze:1983} Kunze studied the notion of  bilateral semidirect product of two semigroups (see \cite{Kunze:1984a,Kunze:1984b} for applications in Automata Theory) and proved in \cite{Kunze:1992} that the full transformation semigroup on a finite set $X$ is a quotient of a bilateral semidirect product of the symmetric group on $X$ and the semigroup of all order-preserving full transformations on $X$, for some linear order on $X$.
Also in \cite{Kunze:1992}, Kunze showed that the semigroup of all order-preserving full transformations on a finite chain is a quotient of a bilateral semidirect product of two of its subsemigroups. These results as well as applications to Formal Languages were also discussed by Kunze in \cite{Kunze:1993}. 
Bilateral semidirect products were also considered by Lavers 
\cite{Lavers:1998} who gave conditions under which
a bilateral semidirect product of two finitely presented monoids is itself
finitely presented, by exhibiting explicit presentations, under some conditions.

In this note we construct bilateral semidirect decompositions, 
i.e. a representation of monoid $S$ as a quotient of a bilateral semidirect product of two proper submonoids of $S$, of certain monoids of partial permutations. 

\smallskip

Denote by $\mathcal{T}(X)$ the semigroup (under composition) of all full transformations
of a set $X$. Let $S$ and $T$ be two semigroups. Let
$$
\begin{array}{llllllll}
\delta:&T&\longrightarrow&\mathcal{T}(S)&&&\\
&u&\longmapsto&\delta_u:&S&\longrightarrow&S\\
&&&&s&\longmapsto&u\la s
\end{array}
$$
be an anti-homomorphism of semigroups (i.e.
$(uv)\la s=u\la(v\la s)$, for $s\in S$ and $u,v\in T$) and let
$$\begin{array}{llllllll}
\varphi:&S&\longrightarrow&\mathcal{T}(T)&&&\\
&s&\longmapsto&\varphi_s:&T&\longrightarrow&T\\
&&&&u&\longmapsto&u^s
\end{array}$$
be a homomorphism of semigroups (i.e.
$u^{sr}={(u^s)}^r$, for $s,r \in S$ and $u \in T$) such that:
\begin{description}
 \item (SPR) $(uv)^s=u^{v\la s}v^s$, for $s \in S$ and $u,v \in T$
(\textit{Sequential Processing Rule}); and
 \item (SCR) $u\la(sr)=(u\la s)(u^s\la r)$, for $s,r \in S$ and $u \in T$
(\textit{Serial Composition Rule}).
\end{description}
Within these conditions, we say that $\delta$ is a \textit{left action} of $T$ on $S$ and that $\varphi$ is a \textit{right action} of $S$ on $T$.

In \cite{Kunze:1983}, Kunze proved that the set $S\times T$ is a semigroup  with respect to the following multiplication:
$$
(s,u)(r,v)=(s(u\la r),u^rv),
$$
for $s,r\in S$ and $u,v\in T$. We denote this semigroup by $S{_\delta\kunze_\varphi}T$ (or, if it is not ambiguous, simply by $S\kunze T$) and call it the \textit{bilateral semidirect product} of $S$ and $T$ associated with $\delta$ and $\varphi$.

If $S$ and $T$ are monoids and the actions $\delta$ and $\varphi$ preserve the identity
(i.e. $1\la s=s$, for $s\in S$, and $u^1=u$, for $u\in T$) and are \textit{monoidal} (i.e.
$u\la 1=1$, for $u\in T$, and $1^s=1$, for $s\in S$), then  $S\kunze T$ is a monoid with identity $(1,1)$.

Here, we will just consider bilateral semidirect products of monoids associated to monoidal actions.

Notice that, if the right action $\varphi$ is a trivial action
(i.e. $(S)\varphi=\{\mathrm{id}_T\}$) then
$S\kunze T=S\se T$ is an usual semidirect product, if the left action $\delta$ is a trivial action (i.e. $(T)\delta=\{\mathrm{id}_S\}$) then $S\kunze T$ coincides with a reverse semidirect product $S\sd T$ and if both actions are trivial then $S\kunze T$ is the usual direct product $S\times T$. Observe also that the bilateral semidirect product is quite different
from the \textit{double} semidirect product by Rhodes and Tilson \cite{Rhodes&Tilson:1989}, wherein the second components multiply always as in the direct product.

\smallskip 

A partial transformation $s$ on the chain $X_n=\{1<2<\cdots<n\}$, $n\in\N$, is said to be
\textit{order-preserving} (respectively, \textit{order-reversing}) if $i\le j$ implies $is\le js$ (respectively, $is\geq js$) for all
$i,j \in\dom(s)$.  Order-preserving and order-reversing partial transformations are also called \textit{monotone}. 

Semigroups of order-preserving transformations have been considered in the literature
since the 1960s. 
In  1962, A\v\i zen\v stat \cite{Aizenstat:1962} and Popova \cite{Popova:1962} exhibited presentations for
$\mathcal{O}_n$, the monoid of all order-preserving full transformations
on $X_n$, and for $\PO_n$, the  monoid of all
order-preserving partial transformations on $X_n$, respectively.
In 1971, Howie \cite{Howie:1971}
studied some combinatorial and algebraic properties of $\mathcal{O}_n$
and, in 1992, together with Gomes \cite{Gomes&Howie:1992}
revisited the monoids $\mathcal{O}_n$ and $\PO_n$. 
More combinatorial properties of these two monoids were presented by Laradji and Umar in \cite{Laradji&Umar:2004,Laradji&Umar:2006}. 
Certain classes of divisors of the monoid $\mathcal{O}_n$ 
were determined in 1995 by Higgins \cite{Higgins:1995} and by Vernitski\u{\i} and Volkov \cite{Vernitskii&Volkov:1995}, in 1997
by Fernandes \cite{Fernandes:1997} and in 2010 by Fernandes and Volkov  \cite{Fernandes&Volkov:2010}.  
In \cite{Kunze:1992} Kunze proved that the monoid $\mathcal{O}_n$ is a quotient of a bilateral semidirect product of its subsemigroups 
${\mathcal{O}}_n^-=\{s\in\mathcal{O}_n\mid is\le i, ~\mbox{for $i\in X_n$}\}$ 
and ${\cal{O}}_n^+=\{s\in\mathcal{O}_n\mid i\le is, ~\mbox{for $i\in X_n$}\}$.  
See also \cite{Kunze:1993,Fernandes&Quinteiro:SF2011,Fernandes&Quinteiro:2011}. 

\smallskip 

The injective counterpart of $\mathcal{O}_n$, 
i.e. the monoid $\POI_n$ of all
injective members of $\PO_n$, has been object of study by the
first author in several papers
\cite{Fernandes:1997,Fernandes:1998,Fernandes:2001,Fernandes:2002,Fernandes:2008}, by Derech in \cite{Derech:1991}, 
by Cowan and Reilly in \cite{Cowan&Reilly:1995}, by Ganyushkin and Mazorchuk in  \cite{Ganyushkin&Mazorchuk:2003}, among other authors. 
Presentations for the monoid $\POI_n$ and for its extension $\PODI_n$,  
the monoid of all monotone partial permutations on $X_n$, 
were given by Fernandes \cite{Fernandes:2001} in 2001 and 
by Fernandes et al.~\cite{Fernandes&Gomes&Jesus:2004} in 2004, respectively.  
The first author together with Delgado
\cite{Delgado&Fernandes:2000,Delgado&Fernandes:2004} have also computed the abelian kernels of the monoids $\POI_n$ and $\PODI_n$. 

\smallskip 

Next, let $s$ be a partial permutation on $X_n$. We say that $s$ is an  
\textit{isometry} if $|is-js|=|i-j|$, for all $i,j \in \dom(s)$. 

The study of semigroups of finite partial isometries was initiated by Al-Kharousi et al.~in \cite{AlKharousi&Kehinde&Umar:2014,AlKharousi&Kehinde&Umar:2014s}. 
The first of these two papers was dedicated to investigate some combinatorial properties of the monoid $\DP_n$ of all partial isometries on $X_n$ and of its submonoid $\ODP_n$ of all order-preserving partial isometries, in particular, their cardinalities. The second one presented the study of some of their algebraic properties, namely Green's structure and ranks.  
On the other hand, in \cite{Fernandes&Quinteiro:2014sub} the authors exhibited presentations for both monoids $\DP_n$ and $\ODP_n$.  
Observe that $\ODP_n$, $\POI_n$, $\DP_n$ and $\PODI_n$ are all inverse submonoids of the symmetric inverse monoid (i.e. the monoid of all partial permutations) 
$\I_n$ on $X_n$ 
(see \cite{AlKharousi&Kehinde&Umar:2014s,Fernandes&Gomes&Jesus:2004}). 
Obviously, $\POI_n\subseteq\PODI_n$ and $\ODP_n=\DP_n\cap\POI_n$ and, 
as observed by Al-Kharousi et al.~\cite{AlKharousi&Kehinde&Umar:2014s}, 
we also have $\DP_n\subseteq\PODI_n$. Moreover, it is easy to check that 
$
\ODP_n=\{s \in\I_n \mid  is-js=i-j, ~\mbox{for $i,j \in\dom(s)$}\}. 
$

\smallskip 

In this paper, in Section \ref{bilateral}, we obtain a bilateral semidirect decomposition of $\POI_n$ in terms of its submonoids 
$\POI_n^+=\{s\in\POI_n\mid i\le is, ~\mbox{for $i\in\dom(s)$}\}$ 
and 
$\POI_n^-=\{s\in\POI_n\mid is\le i, ~\mbox{for $i\in\dom(s)$}\}$ 
of extensive and of co-extensive transformations, respectively. 
A similar decomposition is constructed for the monoid $\ODP_n$ by considering its submonoids 
$\ODP_n^+=\ODP_n\cap\POI_n^+$ 
and 
$\ODP_n^-=\ODP_n\cap\POI_n^-$. 
On the other hand, in Section \ref{unilateral}, we prove that $\PODI_n$ and $\DP_n$ are quotients of semidirect products of the form $\POI_n\se\mathcal{C}_2$ and $\ODP_n\se\mathcal{C}_2$, respectively, where $\mathcal{C}_2$ denotes the group of order two. 
In both sections we extract consequences for pseudovarieties generated by some of these  families of partial permutations monoids. 
 
\smallskip 

Recall that a \textit{pseudovariety} of monoids is a
class of finite monoids closed under formation of finite direct products,
submonoids and homomorphic images. The \textit{semidirect product} 
$\pv{V}\se\pv{W}$ of the pseudovarieties of monoids $\pv{V}$ and $\pv{W}$
is the pseudovariety generated by all monoidal semidirect products $M\se N$,
where $M\in\pv{V}$ and $N\in\pv{W}$. Similarly, we define the \textit{reverse semidirect product}
$\pv{V}\sd\pv{W}$ and the \textit{bilateral semidirect product} $\pv{V}\kunze\pv{W}$
of the pseudovarieties of monoids $\pv{V}$ and $\pv{W}$. 

\smallskip 

Let $\pv{O}$ and $\pv{J}$ be the pseudovarieties of monoids generated by
$\{\mathcal{O}_n\mid n\in\N\}$ and by $\{\mathcal{O}^+_n\mid n\in\N\}$
(or, since $\mathcal{O}_n^+$ and $\mathcal{O}_n^-$ are isomorphic monoids, by $\{\mathcal{O}^-_n\mid n\in\N\}$), respectively.
It is well-known that $\pv{J}$ is the
pseudovariety of $\mathscr{J}$-trivial monoids and that it also is generated by the syntactic monoids of piecewise testable languages (see e.g. \cite{Pin:1986}).
Let $\pv{A}$ be the pseudovariety 
of all aperiodic (i.e. $\mathscr{H}$-trivial) monoids. 
It is easy to show that $\pv{J}\kunze\pv{J}\subseteq \pv{A}$ and,
as an immediate consequence of Kunze's result \cite{Kunze:1992} above mentioned,
we have $\pv{O}\subseteq\pv{J}\kunze\pv{J}$ (see \cite{Fernandes&Quinteiro:SF2011}). 
On the other hand, let $\pv{Ecom}$ be the pseudovariety of all idempotent commuting monoids (recall that a celebrated Theorem of Ash \cite{Ash:1987} states that $\pv{Ecom}$ is generated by all finite inverse monoids) and let $\pv{POI}$ and $\pv{PODI}$ be the pseudovarieties generated by $\{\POI_n\mid n\in\N\}$ and by $\{\PODI_n\mid n\in\N\}$, respectively. Notice that $\pv{POI}\subset\pv{O}\subset\pv{A}$ \cite{Fernandes:1997} 
and that $\pv{J}\cap\pv{Ecom}$ is the pseudovariety generated by $\{\POI^+_n\mid n\in\N\}$ (or, since $\POI_n^+$ and $\POI_n^-$ are isomorphic monoids, 
by $\{\POI^-_n\mid n\in\N\}$) \cite{Higgins:1999}. 
Finally, consider the pseudovariety of monoids $\pv{Ab}_2$ generated by $\mathcal{C}_2$ (a pseudovariety of Abelian groups).  

\smallskip 

For for basic notions on Semigroup Theory, we refer the reader to Howie's book  \cite{Howie:1995}. 

\smallskip 

For simplicity, from now on we consider $n\ge3$. 

\section{On the monoids $\POI_n$ and $\ODP_n$}\label{bilateral}

In this section we show that $\POI_n$ and $\ODP_n$ are homomorphic images of certain bilateral semidirect products of the form $\POI_n^-\kunze \POI_n^+$ and $\ODP_n^-\kunze \ODP_n^+$, respectively. 

\smallskip 

We begin by constructing a bilateral semidirect product $\POI_n^-\kunze \POI_n^+$. 

Let $s,u\in \POI_n\setminus\{1\}$. Define the elements $u\la s,u^s\in\POI_n$ by 
$$
\dom(u\la s)=\dom(us) \quad\text{and}\quad \im(u\la s)=\{1,\ldots,|\dom(us)|\},
$$
$$
\dom(u^s)=\{1,\ldots,|\im(us)|\} \quad\text{and}\quad \im(u^s)=\im(us).
$$

Observe that any element of $\POI_n$ is well defined by its domain and image.

Notice that 
$$
\im(u\la s)=\dom(u^s)
$$
and, clearly,  
$$ 
u\la s\in\POI_n^- \quad\text{and}\quad u^s\in\POI_n^+. 
$$

Define also 
$$
1\la s=s,\quad u^1=u,\quad u\la 1=1,\quad 1^s=1,\quad 1\la 1=1 
\quad\text{and}\quad 1^1=1. 
$$

Consider the following two (well defined) functions: 
$$
\begin{array}{ccccccc}
\delta:&\POI_n^+ &\longrightarrow&\mathcal{T}(\POI_n^-)&&\\
&u&\longmapsto&\delta_u:\POI_n^-&\longrightarrow&\POI_n^-\\
&&&s&\longmapsto&u\la s
\end{array}
$$
and 
$$
\begin{array}{ccccccc}
\varphi:&\POI_n^-&\longrightarrow&\mathcal{T}(\POI_n^+)&&\\
&s&\longmapsto&\varphi_s:\POI_n^+&\longrightarrow&\POI_n^+\\
&&&u&\longmapsto&u^s. 
\end{array}
$$
We have: 

\begin{lemma}\label{morphis}
The above defined functions $\delta$ and $\varphi$ are an anti-homomorphism of monoids and a homomorphism of monoids, respectively. 
\end{lemma}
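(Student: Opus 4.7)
The strategy is to exploit the fundamental observation that every element of $\POI_n$ is uniquely determined by its domain and its image, being the unique order-isomorphism between them. Consequently, to verify an equality in $\POI_n$ it suffices to check that both sides share the same domain and the same image. The identity-preservation conditions $\delta_1=\mathrm{id}_{\POI_n^-}$ and $\varphi_1=\mathrm{id}_{\POI_n^+}$, together with the monoidality $u\la 1=1$ and $1^s=1$, are immediate from the explicit stipulations in the definition; likewise the boundary instances of the (anti-)homomorphism identities in which one of $u,v,s,r$ equals $1$ reduce directly to those same stipulations. So the real work concerns the generic case where all four arguments are distinct from $1$.

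For the anti-homomorphism relation $(uv)\la s=u\la(v\la s)$ of $\delta$, I would compute both sides via their domains and images. By definition, $(uv)\la s$ has domain $\dom((uv)s)=\dom(uvs)$ and image the initial segment $\{1,\ldots,|\dom(uvs)|\}$. On the other hand, since $v\la s$ is the order-isomorphism from $\dom(vs)$ onto $\{1,\ldots,|\dom(vs)|\}$, the composite $u(v\la s)$ has domain $\{i\in\dom(u):iu\in\dom(vs)\}=\dom(uvs)$, and hence $u\la(v\la s)$ has this same domain $\dom(uvs)$ together with image an initial segment of matching cardinality. The two sides therefore coincide by the uniqueness principle.

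For the homomorphism relation $u^{sr}=(u^s)^r$ of $\varphi$, I proceed similarly by comparing images. The element $u^{sr}$ has image $\im(u(sr))=\im(usr)$ and domain $\{1,\ldots,|\im(usr)|\}$. For the right-hand side, the equality $\im(u^s)=\im(us)$ yields
$$
\im(u^s\cdot r)=(\im(u^s)\cap\dom(r))\, r=(\im(us)\cap\dom(r))\, r=\im(usr),
$$
so $(u^s)^r$ has image $\im(usr)$ and domain the initial segment of matching size; once again the two elements agree.

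The whole argument is essentially a domain-and-image bookkeeping exercise, so I do not anticipate any serious obstacle. The only subtlety worth flagging is that the auxiliary transformations $v\la s$ and $u^s$ live on rescaled initial segments $\{1,\ldots,k\}$ rather than on the ``natural'' sets $\dom(vs)$ and $\im(us)$; one must therefore be mildly careful when composing them with $u$ on the left or with $r$ on the right. Once this is assimilated, the verification reduces to the elementary set-theoretic identities recorded above, in combination with the uniqueness of order-isomorphisms between finite subsets of $X_n$.
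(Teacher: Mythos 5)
Your proposal is correct and follows essentially the same route as the paper: after disposing of the identity cases, it reduces each equality to a comparison of domains (for $\delta$) and images (for $\varphi$), using $\dom(v\la s)=\dom(vs)$ and $\im(u^s)=\im(us)$ together with the fact that an element of $\POI_n$ is determined by its domain and image. Your set $\{i\in\dom(u):iu\in\dom(vs)\}$ is just the paper's $(\im(u)\cap\dom(vs))u^{-1}$ written differently, so there is nothing substantively new or missing.
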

\begin{proof}
First, notice that $\delta_1$ and $\varphi_1$ are, clearly, the identity maps of $\POI_n^-$ and of $\POI_n^+$, respectively. 

Now, let $u,v\in \POI_n^+$ and $s,r \in \POI_n^-$. Then, we must prove that 
$$
(uv)\la s=u \la (v \la s)\quad\text{and}\quad u^{sr}=(u^s)^r.
$$
It is immediate that, if any of the elements $u$, $v$ or $s$ is the identity, 
then $(uv)\la s=u \la (v \la s)$, and if any of the elements $u$, $s$ or $r$ is the identity, then $u^{sr}=(u^s)^r$. So, let us suppose that none of the elements $u$, $v$, $s$ and $r$ is the identity. 

In order to prove that $(uv)\la s=u \la (v \la s)$, it suffices to show that 
$\dom((uv)\la s)=\dom(u \la (v \la s))$. In fact, we have
$$
\begin{array}{rcl}
\dom ((uv)\la s)&=&\dom((uv)s)\\
&=&\dom(u(vs))\\
&=&(\im(u) \cap \dom (vs))u^{-1}\\
&=&(\im(u)\cap \dom (v \la s))u^{-1}\\
&=&\dom(u(v \la s))\\
&=&\dom(u \la (v \la s)).
\end{array}
$$
 
On the other hand, in order to prove that $u^{sr}=(u^s)^r$, it suffices to show that $\im(u^{sr})=\im((u^s)^r)$: 
$$
\im(u^{sr})=\im(u(sr))=\im((us)r)=(\im (us) \cap \dom r)r=(\im (u^s) \cap \dom r)r=\im (u^s r)=\im((u^s)^r), 
$$
as required. 
\end{proof}

Before proving that $\delta$ and $\varphi$ also verify sequential processing and serial composition rules, we observe that it is easy to check the equality 
\begin{equation}\label{comute}
 (u\la s)u^s=us,
\end{equation}
for all $u\in \POI_n^+$ and $s \in \POI_n^-$. 

\begin{lemma}\label{rules}
Let $s,r \in\POI_n^-$and $u,v \in \POI_n^+$. Then:
\begin{description}
 \item {\rm (SPR)} ${(uv)}^s=u^{v \la s} v^s$;
 \item {\rm (SCR)} $u \la (sr)=(u \la s)(u^s\la r)$.
\end{description}
\end{lemma}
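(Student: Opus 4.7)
The plan is to reduce both identities to the factorization (\ref{comute}), namely $us=(u\la s)u^s$, by cancelling a common factor that is a partial permutation. The cases in which any of $u,v,s,r$ equals $1$ are immediate from the conventions $1\la s=s$, $u^1=u$, $u\la 1=1$, $1^s=1$, so I would reduce to the case where none of them is the identity.

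For (SPR), I would compute $uvs$ in two different ways. Applying (\ref{comute}) directly to the pair $(uv,s)$ gives
$$uvs=\bigl((uv)\la s\bigr)(uv)^s.$$
Alternatively, applying (\ref{comute}) first to $(v,s)$ and then to $(u,v\la s)$ (noting that $v\la s\in\POI_n^-$) yields
$$uvs=u(v\la s)v^s=\bigl(u\la(v\la s)\bigr)\,u^{v\la s}\,v^s.$$
By Lemma~\ref{morphis}, $u\la(v\la s)=(uv)\la s$, so the two right-hand sides share the same leftmost factor. Left-cancelling this factor (which is a partial permutation) then yields (SPR).

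For (SCR), symmetrically, I would compute $usr$ in two ways. Applying (\ref{comute}) to $(u,sr)$ gives $usr=\bigl(u\la(sr)\bigr)u^{sr}$. Applying (\ref{comute}) twice, first to $(u,s)$ and then to $(u^s,r)$, gives
$$usr=(u\la s)u^s r=(u\la s)(u^s\la r)(u^s)^r,$$
and Lemma~\ref{morphis} provides $(u^s)^r=u^{sr}$. The two expressions share the rightmost factor $u^{sr}$, and right-cancelling this partial permutation produces (SCR).

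The main obstacle is the domain/image bookkeeping required to legitimise the two cancellations, since for partial permutations $a$ one has $a^{-1}a=\mathrm{id}_{\im a}$ (not the full identity), so left-cancelling $a$ from $ab=ac$ requires $\dom b,\dom c\subseteq\im a$, and dually for right-cancellation. For the left cancellation in (SPR) one verifies that $\dom((uv)^s)$ and $\dom(u^{v\la s}v^s)$ both sit inside $\im((uv)\la s)=\{1,\ldots,|\dom(uvs)|\}$; this follows because $uvs$ is injective (so $|\dom(uvs)|=|\im(uvs)|$) and because $\dom(u(v\la s))=\dom(uvs)$ directly from the definitions. For the right cancellation in (SCR) one checks that $\im(u\la(sr))$ and $\im((u\la s)(u^s\la r))$ both lie in $\dom(u^{sr})=\{1,\ldots,|\im(usr)|\}$, again by comparing initial-segment sizes (noting $\im(u^s r)=\im(usr)$). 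These are short computations from the domain/image specifications of $\cdot\la\cdot$ and $\cdot^{\cdot}$, after which both rules drop out.
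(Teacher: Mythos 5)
Your proof is correct, but it reaches the two rules by a genuinely different mechanism than the paper. The paper never writes down the double factorization $uvs=((uv)\la s)(uv)^s=((uv)\la s)u^{v\la s}v^s$ and cancels; instead, for (SPR) it proves directly that the two sides have the same domain (a chain of set identities using $\im(u(v\la s))\subseteq\im(v\la s)$ and Lemma~\ref{morphis}), deduces that the images have equal cardinality, establishes the single inclusion $\im(u^{v\la s}v^s)\subseteq\im((uv)^s)$ via (\ref{comute}), and then concludes equality from the fact, recorded right after the definition of the actions, that an element of $\POI_n$ is determined by its domain and image; (SCR) is handled dually. Your route factors $uvs$ (resp.\ $usr$) in two ways using (\ref{comute}) and the (anti)homomorphism identities of Lemma~\ref{morphis}, and then cancels the common injective factor, with the domain/image bookkeeping serving only to legitimise the cancellation. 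Both arguments rest on the same two ingredients, but yours does not use the ``determined by domain and image'' property of $\POI_n$ at all, so it transfers verbatim to the $\ODP_n$ construction later in the paper and indeed to any setting of injective partial maps; the price is the cancellation lemma for partial injections, whose hypotheses you correctly identify and which do hold here, since $\dom((uv)^s)$, $\dom(u^{v\la s}v^s)$ and $\im((uv)\la s)$ are all contained in the initial segment $\{1,\ldots,|\dom(uvs)|\}$ (and dually for (SCR), using $\im(u^sr)=\im(usr)$). The paper's version is marginally shorter only because the determination-by-domain-and-image property makes ``same domain, same image'' already a complete proof of equality.
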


\begin{proof} (SPR) We begin by noticing that if any of the elements $s$, $u$ or $v$ is the identity then the equality ${(uv)}^s=u^{v \la s} v^s$ is obvious. Thus, admit that none of the elements $s$, $u$ or $v$ is the identity. Since $\im(u(v\la s))\subseteq \im(v\la s)$ and taking in account Lemma \ref{morphis}, we obtain 
$$
\begin{array}{rcl}
\dom(u^{v\la s}v^s)&=&(\im(u^{v\la s})\cap \dom(v^s)){(u^{v\la s})}^{-1}\\
&=&(\im(u(v\la s))\cap \im(v\la s)){(u^{v\la s})}^{-1}\\
&=&(\im(u(v\la s))){(u^{v\la s})}^{-1}\\
&=&(\im(u^{v\la s})){(u^{v\la s})}^{-1}\\
&=&\dom(u^{v\la s})\\
&=&\im(u\la(v\la s))\\
&=&\im((uv)\la s)\\
&=&\dom({(uv)}^s).
\end{array}
$$
Then, in particular, $|\im({(uv)}^s)|=|\im(u^{v\la s}v^s)|$. 
Hence, in order to prove that ${(uv)}^s=u^{v \la s} v^s$, 
it suffices to show, for instance, the inclusion $\im(u^{v \la s} v^s)\subseteq \im({(uv)}^s)$.

Let $y\in \im(u^{v\la s}v^s)$. Then there exists 
$x \in \dom(u^{v \la s} v^s)$ such that $y=x(u^{v\la s}v^s)$. It follows that  
$xu^{v\la s} \in \im(u^{v\la s})=\im (u(v\la s))$ 
and so $xu^{v\la s}=a(u(v\la s))$, for some $a \in \dom(u(v\la s))$. 
Thus, by using (\ref{comute}), we have 
$$
y= (xu^{v\la s})v^s=(a(u(v\la s)))v^s= a(u((v\la s)v^s))= a(u(vs))=a((uv)s)\in 
\im((uv)s)=\im ({(uv)}^s), 
$$
which proves the required inclusion. 

(SCR) As for (SPR), if any of the elements $s$, $r$ or $u$ is the identity then the equality $u \la (sr)=(u \la s)(u^s\la r)$ is trivial. Therefore, let us assume 
that none of these elements is the identity. In view of the inclusion 
$\dom(u^s r)\subseteq \dom(u^s)$ and of Lemma \ref{morphis}, we have 
$$
\begin{array}{rcl}
\im((u \la s)(u^s \la r))&=&(\im(u\la s)\cap \dom(u^s\la r))(u^s\la r)\\
&=&(\dom(u^s)\cap \dom(u^sr))(u^s\la r)\\
&=&(\dom(u^sr))(u^s\la r)\\
&=&(\dom(u^s\la r))(u^s\la r)\\
&=&\im(u^s \la r)\\
&=&\dom((u^s)^r)\\
&=&\dom(u^{sr})\\
&=&\im(u\la(sr)).
\end{array}
$$
It follows, in particular, that 
$|\dom((u \la s)(u^s\la r))|=|\dom(u \la (sr))|$ and so it remains to show, 
for instance, that $\dom ((u\la s)(u^s\la r))\subseteq \dom(u\la (sr))$.
Let $x \in \dom ((u\la s)(u^s\la r))$. 
Then $x(u\la s)\in \dom(u^s\la r)=\dom(u^sr)$, whence 
$x(u\la s)u^s \in \dom(r)$ and so, by (\ref{comute}), 
$x(us)=x(u\la s)u^s \in \dom(r)$. 
Thus $x\in \dom ((us)r)=\dom(u(sr))=\dom (u\la (sr))$, as required. 
\end{proof}

Now, by Lemma \ref{morphis} and Lemma \ref{rules}, we can consider the bilateral semidirect product $\POI_n^- \kunze \POI_n^+$ associated with $\delta$ and $\varphi$. Since $\POI_n^-$ and $\POI_n^+$ are monoids and the actions $\delta$ and $\varphi$ preserve the identity and are monoidal, then $\POI_n^- \kunze \POI_n^+$ is also a monoid. Moreover, as we already observed, $\POI_n^-$ and $\POI_n^+$ are (isomorphic) $\mathscr{J}$-trivial monoids and any bilateral semidirect product of $\mathscr{J}$-trivial monoids is an aperiodic semigroup, whence 
$\POI_n^- \kunze \POI_n^+$ is an aperiodic monoid. 
On the other hand, $\POI_n^- \kunze \POI_n^+$ is not regular and is not an idempotent commuting semigroup. For instance, if $e=\binom{1}{1}$ and $f=\binom{12}{12}$, it is routine matter to show that $(e,\emptyset)$ is not regular, $(1,e)$ and $(f,f)$ are idempotents and $(1,e)(f,f)=(e,e)\ne(f,e)=(f,f)(1,e)$. 

Next, consider the following function 
$$
\begin{array}{lccc}
\mu:&\POI_n^-\kunze \POI_n^+&\longrightarrow&\POI_n\\
&(s,u)&\longmapsto&su~. 
\end{array}
$$
Let $(s,u),(r,v)\in \POI_n^-\kunze \POI_n^+$. As $(u\la r) u^r=ur$, we have 
$$
((s,u)(r,v))\mu=(s(u\la r),u^r v)\mu=s(u\la r)u^r v=surv=(s,u)\mu(r,v)\mu,
$$ 
and so $\mu$ is a homomorphism. In addition, given $t\in\POI_n$, we may define elements $s\in\POI_n^-$ and $u\in\POI_n^+$ by 
$$
\dom(s)=\dom(t),\quad \im(s)=\{1,\ldots,|\dom(t)|\}=\dom(u)
\quad\text{and}\quad
\im(u)=\im(t), 
$$ 
and we obtain $t=su=(s,u)\mu$. Hence $\mu$ is onto homomorphism and we have: 

\begin{theorem}
The monoid $\POI_n$ is a homomorphic image of $\POI_n^-\kunze \POI_n^+$.
\end{theorem}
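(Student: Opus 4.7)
The plan is to verify that the natural map
$$
\mu \colon \POI_n^- \kunze \POI_n^+ \longrightarrow \POI_n, \qquad (s,u) \longmapsto su,
$$
is a surjective monoid homomorphism. By Lemmas \ref{morphis} and \ref{rules} the bilateral semidirect product $\POI_n^- \kunze \POI_n^+$ is indeed a monoid, and since both $\POI_n^-$ and $\POI_n^+$ are submonoids of $\POI_n$ the product $su$ always lies in $\POI_n$, so $\mu$ is well defined and sends the identity $(1,1)$ to the identity of $\POI_n$.

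For multiplicativity, the key ingredient is the identity $(u\la r)u^r = ur$ of equation (\ref{comute}): given $(s,u),(r,v) \in \POI_n^- \kunze \POI_n^+$, one computes in one line that
$$
((s,u)(r,v))\mu = (s(u\la r),\, u^r v)\mu = s(u\la r)u^r v = surv = (s,u)\mu\,(r,v)\mu.
$$

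For surjectivity, I would exhibit a concrete factorisation of an arbitrary $t \in \POI_n$ as $t = su$ with $s \in \POI_n^-$ and $u \in \POI_n^+$. Setting $k = |\dom(t)|$, define $s$ to be the unique element of $\POI_n$ with $\dom(s) = \dom(t)$ and $\im(s) = \{1,\ldots,k\}$, and $u$ to be the unique element of $\POI_n$ with $\dom(u) = \{1,\ldots,k\}$ and $\im(u) = \im(t)$; uniqueness is guaranteed by the observation, recalled just before (\ref{comute}), that an element of $\POI_n$ is determined by its domain and image. Writing $\dom(t) = \{i_1 < \cdots < i_k\}$ one has $i_j s = j \le i_j$, so $s \in \POI_n^-$, and dually $u \in \POI_n^+$. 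Since $su$ has domain $\dom(t)$, image $\im(t)$, and is order-preserving and injective, it must coincide with $t$.

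The only genuine obstacle is this last verification, namely that the candidate factors $s$ and $u$ lie in $\POI_n^-$ and $\POI_n^+$ respectively; it reduces to the elementary fact that the $j$-th smallest element of a subset of $X_n$ is at least $j$, together with its order-dual.
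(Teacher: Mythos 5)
Your proposal is correct and follows essentially the same route as the paper: the same map $(s,u)\mapsto su$, multiplicativity via the identity $(u\la r)u^r=ur$ from (\ref{comute}), and surjectivity via the same factorisation of $t$ through the interval $\{1,\ldots,|\dom(t)|\}$. The only difference is that you spell out why the factors land in $\POI_n^-$ and $\POI_n^+$, a detail the paper leaves implicit.
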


As an immediate consequence of this result and the above observed fact that  
$\POI_n^-\kunze \POI_n^+\not\in\pv{Ecom}$, we have the following property:

\begin{corollary}
$\pv{POI}\subsetneq(\pv{J}\cap\pv{Ecom})\kunze(\pv{J}\cap\pv{Ecom})\not\subseteq\pv{Ecom}$. 
\end{corollary}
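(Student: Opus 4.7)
The plan is to deduce the whole corollary from the preceding Theorem together with the explicit calculation done in the paragraph immediately before its statement.

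For the inclusion $\pv{POI}\subseteq(\pv{J}\cap\pv{Ecom})\kunze(\pv{J}\cap\pv{Ecom})$, I would argue as follows. By the Theorem, for each $n\ge 3$, the monoid $\POI_n$ is a homomorphic image of $\POI_n^-\kunze\POI_n^+$. Since $\POI_n^-$ and $\POI_n^+$ are isomorphic members of $\pv{J}\cap\pv{Ecom}$ (by Higgins' result quoted in the introduction), the bilateral semidirect product $\POI_n^-\kunze\POI_n^+$ lies in $(\pv{J}\cap\pv{Ecom})\kunze(\pv{J}\cap\pv{Ecom})$ by the very definition of the bilateral semidirect product of pseudovarieties. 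Closure of pseudovarieties under homomorphic images then yields $\POI_n\in(\pv{J}\cap\pv{Ecom})\kunze(\pv{J}\cap\pv{Ecom})$ for every $n\ge 3$, and the small cases $n\in\{1,2\}$ contribute only trivial monoids; hence $\pv{POI}\subseteq(\pv{J}\cap\pv{Ecom})\kunze(\pv{J}\cap\pv{Ecom})$.

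For the non-inclusion $(\pv{J}\cap\pv{Ecom})\kunze(\pv{J}\cap\pv{Ecom})\not\subseteq\pv{Ecom}$, I would simply invoke the explicit witness exhibited just before the Theorem: taking $e=\binom{1}{1}$ and $f=\binom{12}{12}$ in $\POI_n^-\cap\POI_n^+$, the elements $(1,e)$ and $(f,f)$ are commuting idempotents in $\POI_n^-\kunze\POI_n^+$ that fail to commute, which shows $\POI_n^-\kunze\POI_n^+\notin\pv{Ecom}$. Since this monoid belongs to $(\pv{J}\cap\pv{Ecom})\kunze(\pv{J}\cap\pv{Ecom})$, the pseudovariety $(\pv{J}\cap\pv{Ecom})\kunze(\pv{J}\cap\pv{Ecom})$ is not contained in $\pv{Ecom}$.

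Finally, strictness of the first inclusion is a free consequence of the second non-inclusion combined with the standard observation that $\pv{POI}\subseteq\pv{Ecom}$: indeed, each $\POI_n$ is an inverse submonoid of $\I_n$ and so its idempotents commute, hence $\POI_n\in\pv{Ecom}$ for every $n$, giving $\pv{POI}\subseteq\pv{Ecom}$. Since $(\pv{J}\cap\pv{Ecom})\kunze(\pv{J}\cap\pv{Ecom})$ contains a monoid (namely $\POI_n^-\kunze\POI_n^+$) outside $\pv{Ecom}$, we conclude $\pv{POI}\subsetneq(\pv{J}\cap\pv{Ecom})\kunze(\pv{J}\cap\pv{Ecom})$.

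There is essentially no obstacle here: the two substantive facts (that $\POI_n$ is a quotient of $\POI_n^-\kunze\POI_n^+$, and that $\POI_n^-\kunze\POI_n^+$ is not idempotent-commuting) are already in hand from the Theorem and the remark preceding it, so the proof reduces to stringing together definitions of the operators on pseudovarieties and the closure properties that define a pseudovariety.
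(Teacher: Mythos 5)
Your proof is correct and follows exactly the route the paper intends: the inclusion comes from the Theorem plus closure of pseudovarieties under quotients, the non-inclusion from the explicit idempotents $(1,e)$ and $(f,f)$ exhibited before the Theorem, and strictness from $\pv{POI}\subseteq\pv{Ecom}$ (the $\POI_n$ being inverse monoids). The only blemish is the self-contradictory phrase ``commuting idempotents \ldots{} that fail to commute''; you mean simply ``idempotents that fail to commute''.
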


\medskip 

Next, we construct a bilateral semidirect product $\ODP_n^-\kunze \ODP_n^+$, 
just by slightly modifying the definition of the previous actions. Although with different meanings, we will use the same notations in this new context.  

Let $s,u\in \ODP_n\setminus\{1\}$ and 
suppose that $\dom(us)=\{i_1,\ldots,i_k\}$, for some $1\le i_1<\cdots<i_k\le n$ and $0\le k<n$. Define the elements $u\la s,u^s\in\POI_n$ by 
$$
\dom(u\la s)=\dom(us) \quad\text{and}\quad 
\im(u\la s)=\{1,1+i_2-i_1,\ldots,1+i_k-i_1\},
$$
$$
\dom(u^s)=\{1,1+i_2us-i_1us,\ldots,1+i_kus-i_1us\} \quad\text{and}\quad \im(u^s)=\im(us)
$$
(considering $u\la s=u^s=\emptyset$ if $us=\emptyset$). 

Notice that, clearly,  
$$ 
u\la s\in\ODP_n^- \quad\text{and}\quad u^s\in\ODP_n^+. 
$$
Moreover 
$$
\im(u\la s)=\dom(u^s). 
$$

Define also 
$$
1\la s=s,\quad u^1=u,\quad u\la 1=1,\quad 1^s=1,\quad 1\la 1=1 
\quad\text{and}\quad 1^1=1. 
$$

As for the first studied case, it is easy to check the equality 
\begin{equation}\label{comutealso}
 (u\la s)u^s=us,
\end{equation}
for all $u\in \ODP_n^+$ and $s \in \ODP_n^-$,  
and we may consider the following two functions: 
$$
\begin{array}{ccccccc}
\delta:&\ODP_n^+ &\longrightarrow&\mathcal{T}(\ODP_n^-)&&\\
&u&\longmapsto&\delta_u:\ODP_n^-&\longrightarrow&\ODP_n^-\\
&&&s&\longmapsto&u\la s
\end{array}
$$
and 
$$
\begin{array}{ccccccc}
\varphi:&\ODP_n^-&\longrightarrow&\mathcal{T}(\ODP_n^+)&&\\
&s&\longmapsto&\varphi_s:\ODP_n^+&\longrightarrow&\ODP_n^+\\
&&&u&\longmapsto&u^s.
\end{array}
$$

By exact replication of the proofs of Lemma \ref{morphis} and Lemma \ref{rules}, we may prove the following lemma: 

\begin{lemma}\label{newactions} 
The functions $\delta$ and $\varphi$ are a monoidal left action of $\ODP_n^+$ on $\ODP_n^-$ and a monoidal right action of $\ODP_n^-$ on $\ODP_n^+$, respectively. 
\end{lemma}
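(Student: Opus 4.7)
The plan is to mirror the proofs of Lemma \ref{morphis} and Lemma \ref{rules} almost verbatim, after performing a few preliminary checks that the new definitions of $u\la s$ and $u^s$ support the same formal manipulations. The conditions $1\la s = s$, $u^1 = u$, $u\la 1 = 1$ and $1^s = 1$ are built into the definition, so $\delta_1$ and $\varphi_1$ act as identity maps and both actions are monoidal; and one reads straight from the definition that $u\la s \in \ODP_n^-$, that $u^s \in \ODP_n^+$, and that $\im(u\la s) = \dom(u^s)$.

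The subtle point that needs attention is that while an element of $\POI_n$ is determined by its domain and image, the isometry condition defining $\ODP_n$ is more restrictive. However, by construction $\im(u\la s)$ is the unique set $\{1, 1+(i_2-i_1), \ldots, 1+(i_k-i_1)\}$ determined by $\dom(u\la s) = \{i_1 < \cdots < i_k\}$, so the element $u\la s$ of $\ODP_n^-$ is in fact determined by its domain alone; dually, $u^s$ is pinned down by its image. This means the strategy used in Lemma \ref{morphis} --- establishing $(uv)\la s = u\la(v\la s)$ by comparing domains and $u^{sr} = (u^s)^r$ by comparing images --- continues to work, and the chains of equalities given there transcribe without change, since they invoke only the relations $\dom(u\la s) = \dom(us)$, $\im(u^s) = \im(us)$ and $\im(u\la s) = \dom(u^s)$, each of which remains valid here.

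It then remains to verify the key identity $(u\la s)u^s = us$ of (\ref{comutealso}). Writing $\dom(us) = \{i_1 < \cdots < i_k\}$, the map $u\la s$ sends $i_j$ to $1 + i_j - i_1$; because $us \in \ODP_n$ is an order-preserving partial isometry on its domain one has $i_j us - i_1 us = i_j - i_1$, so $1 + i_j - i_1 = 1 + i_j us - i_1 us \in \dom(u^s)$, and $u^s$ sends this element to $i_j us$. With (\ref{comutealso}) in hand, the proofs of (SPR) and (SCR) from Lemma \ref{rules} become purely formal and rely only on (\ref{comutealso}) together with the domain/image equalities already recorded, so they transfer verbatim. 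The only real point to watch --- more a matter of bookkeeping than a genuine obstacle --- is the uniqueness observation of the second paragraph, without which the ``compare domains'' / ``compare images'' technique would no longer pin down the elements; once this is noted, the rest is the ``exact replication'' signaled by the authors.
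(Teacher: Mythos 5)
Your proposal is correct and follows essentially the same route as the paper, which itself disposes of this lemma by declaring it an ``exact replication'' of the proofs of Lemma \ref{morphis} and Lemma \ref{rules}; you simply make explicit the two points that justify the replication (that $u\la s$ and $u^s$ are pinned down by domain alone, respectively image alone, so the ``compare domains/images'' technique still applies, and that the isometry identity $i_jus-i_1us=i_j-i_1$ yields (\ref{comutealso})). No gaps.
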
 

This lemma allows us to consider the bilateral semidirect product $\ODP_n^-\kunze\ODP_n^+$ associated with $\delta$ and $\varphi$, which is, likewise $\POI_n^- \kunze \POI_n^+$, a non regular and non idempotent commuting aperiodic monoid. We may also consider the function 
$$
\begin{array}{lccc}
\mu:&\ODP_n^-\kunze\ODP_n^+&\longrightarrow&\ODP_n\\
&(s,u)&\longmapsto&su~, 
\end{array}
$$
which is, by (\ref{comutealso}), clearly a homomorphism. Moreover, 
let $t\in\ODP_n$ be such that $\dom(t)=\{i_1,\ldots,i_k\}$, 
for some $1\le i_1<\cdots<i_k\le n$ and $0\le k\le n$, and  
define elements $s\in\ODP_n^-$ and $u\in\ODP_n^+$ by 
$$
\dom(s)=\dom(t),
$$
$$
\im(s)=\{1,1+i_2-i_1,\ldots,1+i_k-i_1\}=\{1,1+i_2t-i_1t,\ldots,1+i_kt-i_1t\}=\dom(u)
$$
and
$$
\im(u)=\im(t).  
$$ 
Then $t=su=(s,u)\mu$ and so $\mu$ is onto homomorphism.  

Hence, we have the following result, with which we finish this section.  

\begin{theorem}
The monoid $\ODP_n$ is a homomorphic image of $\ODP_n^-\kunze\ODP_n^+$.
\end{theorem}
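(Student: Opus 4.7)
The plan is to mimic the $\POI_n$ case exactly, using the obvious candidate homomorphism $\mu\colon\ODP_n^-\kunze\ODP_n^+\to\ODP_n$ defined by $(s,u)\mapsto su$. Lemma~\ref{newactions} already provides the monoidal left and right actions needed to form the bilateral semidirect product, and identity~(\ref{comutealso}) is the crucial commutation relation that will make $\mu$ respect the twisted multiplication. So what remains is to check: (i) $\mu$ is well-defined; (ii) $\mu$ is a monoid homomorphism; and (iii) $\mu$ is surjective.

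Well-definedness of $\mu$ is immediate because $\ODP_n$ is a monoid containing both $\ODP_n^-$ and $\ODP_n^+$ as submonoids, so $su\in\ODP_n$. For the homomorphism property, I would compute directly, using the multiplication rule of the bilateral semidirect product together with $(u\la r)u^r=ur$ from~(\ref{comutealso}):
\[
((s,u)(r,v))\mu=(s(u\la r),u^rv)\mu=s(u\la r)u^rv=s(ur)v=(s,u)\mu\,(r,v)\mu.
\]
Since $\delta$ and $\varphi$ are monoidal, $(1,1)$ is the identity of the bilateral semidirect product, which maps to $1\cdot1=1\in\ODP_n$, so $\mu$ preserves the identity too.

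For surjectivity, given $t\in\ODP_n$ with $\dom(t)=\{i_1<\cdots<i_k\}$, I would construct $(s,u)\in\ODP_n^-\kunze\ODP_n^+$ with $su=t$ by setting
\[
\dom(s)=\dom(t),\qquad \im(s)=\{1,1+i_2-i_1,\ldots,1+i_k-i_1\}=\dom(u),\qquad \im(u)=\im(t).
\]
The main technical point, and the only place the isometry hypothesis genuinely enters, is verifying that $s$ and $u$ are \emph{well-defined} order-preserving partial isometries (i.e.\ that they exist in $\ODP_n^-$ and $\ODP_n^+$, respectively), and that their product equals $t$. The former holds because an order-preserving partial isometry is determined by the spacings of its domain and image, and by construction both $\dom(s)$ and $\im(s)$ have the same consecutive differences $i_{j+1}-i_j$, so $s\in\ODP_n^-$; similarly, since $t$ is an isometry, $i_jt-i_1t=i_j-i_1$, so the spacings of $\dom(u)=\im(s)$ match those of $\im(u)=\im(t)$ and $u\in\ODP_n^+$. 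That $su=t$ then follows immediately by comparing domains, images and the common spacing pattern. I expect this last matching-of-spacings argument, already implicit in the redefinition of $\delta$ and $\varphi$ in the $\ODP_n$ context, to be the only step requiring care; everything else is a literal transcription of the $\POI_n$ proof.
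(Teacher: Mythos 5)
Your proposal is correct and follows essentially the same route as the paper: the same map $\mu\colon(s,u)\mapsto su$, the same appeal to the commutation identity~(\ref{comutealso}) for the homomorphism property, and the same explicit construction of the factorization $t=su$ via $\dom(s)=\dom(t)$, $\im(s)=\{1,1+i_2-i_1,\ldots,1+i_k-i_1\}=\dom(u)$ and $\im(u)=\im(t)$, using the isometry property $i_jt-i_1t=i_j-i_1$ to see that the spacings match. The only difference is that you spell out the well-definedness of $s$ and $u$ as elements of $\ODP_n^-$ and $\ODP_n^+$, which the paper leaves implicit.
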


\section{On the monoids $\PODI_n$ and $\DP_n$}\label{unilateral}

Let 
$$
h=\left(\begin{array}{ccccc}
1&2&\cdots&n-1&n \\ 
n&n-1&\cdots&2&1
\end{array}\right).
$$
Then $h\in\DP_n$ (and so $h\in\PODI_n$). Moreover, the identity (on $X_n$) and $h$ are the only permutations of $\PODI_n$ (and so of $\DP_n$). 
On the other hand, given $\alpha\in \PODI_n$, it is clear that $\alpha$ is an order-reversing transformation if and only if $h\alpha$ (and $\alpha h$) is an order-preserving transformation (see \cite{Fernandes&Gomes&Jesus:2004}). Hence, as $\alpha=h^2\alpha=h(h\alpha)$, it follows that the monoids $\PODI_n$ and $\DP_n$ are generated by $\POI_n\cup \{h\}$ and $\ODP_n\cup \{h\}$, respectively. 
Furthermore, we may see the cyclic group of order two $\mathcal{C}_2=\{1,h\}$ as a submonoid of both the monoids $\PODI_n$ and $\DP_n$. 
Notice that, given $x,y\in{\cal{C}}_2$, we have $xy=yx$ and $x^2=y^2=1$.

\smallskip 

First, we turn our attention to the monoid $\PODI_n$. We obtain a semidirect decomposition of it in terms of its submonoids $\POI_n$ and $\mathcal{C}_2$. 

For each $x\in\mathcal{C}_2$ and $s\in\POI_n$, define the element $x\la s=xsx\in\POI_n$.  
Then, consider the function 
$$
\begin{array}{ccccccc}
\delta:&\mathcal{C}_2 &\longrightarrow&\mathcal{T}(\POI_n)&&\\
&x&\longmapsto&\delta_x:\POI_n&\longrightarrow&\POI_n\\
&&&s&\longmapsto&x\la s~. 
\end{array}
$$
Since $(xy)\la s=xysxy=xysyx=x\la (ysy)=x\la (y\la s)$ and $1\la s=s$, 
for $x,y\in \mathcal{C}_2$ and $s\in \POI_n$, then $\delta$ is an
anti-homomorphism of monoids. On the other hand, 
for $x\in \mathcal{C}_2$ and $s,r\in \POI_n$, 
we have $x\la(sr)=xsrx=xs1rx=xsxxrx=(x\la s)(x\la r)$ and $x\la 1=x1x=x^2=1$. Thus $\delta$ induces a semidirect product $\POI_n\se \mathcal{C}_2$. 

It is easy to prove that $\POI_n\se \mathcal{C}_2$ is an inverse monoid. In fact, it is a routine matter to check that the idempotents of $\POI_n\se \mathcal{C}_2$ commute 
(the idempotents of $\POI_n\se \mathcal{C}_2$ are of the form $(e,1)$, 
with $e$ an idempotent of $\POI_n$) and, given $(s,x)\in \POI_n\se \mathcal{C}_2$, 
the element $(xs^{-1}x,x)$ of $\POI_n\se \mathcal{C}_2$ is an (and so \textit{the}) inverse  of $(s,x)$. 
Moreover, we have:

\begin{theorem}\label{ps1PODIn}
The monoid $\PODI_n$ is a homomorphic image of $\POI_n\se \mathcal{C}_2$.
\end{theorem}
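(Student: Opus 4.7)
The plan is to define the map
$$
\begin{array}{lccc}
\mu:&\POI_n\se \mathcal{C}_2&\longrightarrow&\PODI_n\\
&(s,x)&\longmapsto&sx
\end{array}
$$
and prove it is a surjective homomorphism. First I would check that $\mu$ is well defined, which is immediate since $\POI_n,\mathcal{C}_2\subseteq\PODI_n$, so $sx\in\PODI_n$.

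Next I would verify the homomorphism property. Given $(s,x),(r,y)\in\POI_n\se\mathcal{C}_2$, by the definition of the semidirect product with the action $x\la r=xrx$ we have
$$
\mu\bigl((s,x)(r,y)\bigr)=\mu\bigl(s(x\la r),xy\bigr)=s\,xrx\,xy=sxr\,x^{2}y=sxry=\mu(s,x)\mu(r,y),
$$
where the key identity $x^{2}=1$, valid in $\mathcal{C}_2$, does all the work.

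For surjectivity, I would use the observation already made in the paper that any $\alpha\in\PODI_n$ is either order-preserving or order-reversing, and that $\alpha$ is order-reversing if and only if $\alpha h\in\POI_n$. Given $t\in\PODI_n$, if $t\in\POI_n$ then $t=\mu(t,1)$; otherwise $t$ is order-reversing, so $s:=th\in\POI_n$, and then $\mu(s,h)=th\cdot h=t h^{2}=t$. Hence $\mu$ is onto.

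I do not expect any serious obstacle here: the definition of the action $x\la r=xrx$ has been tailored precisely so that the multiplication on $\POI_n\se\mathcal{C}_2$ collapses to ordinary composition in $\PODI_n$ upon applying $\mu$, and surjectivity is an easy case analysis using the generating set $\POI_n\cup\{h\}$ for $\PODI_n$ together with $h^2=1$.
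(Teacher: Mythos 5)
Your proposal is correct and coincides with the paper's own proof: the same map $(s,x)\mapsto sx$, the same verification of the homomorphism property using $x^2=1$, and the same case analysis ($t\in\POI_n$ versus $t$ order-reversing, in which case $t=(th)h$ with $th\in\POI_n$) for surjectivity. No differences worth noting.
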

\begin{proof}
Consider the function  
$$
\begin{array}{lccc}
\mu:&\POI_n\se \mathcal{C}_2 &\longrightarrow&\PODI_n\\
&(s,x)&\longmapsto&sx~. 
\end{array}
$$ 
Then, for $s,r\in \POI_n$ and $x,y\in \mathcal{C}_2$, we have 
$$
((s,x)(r,y))\mu=(s (x\la r),xy)\mu=(sxrx,xy)\mu=sxrx^2y=sxry=(s,x)\mu(r,y)\mu. 
$$ 
Thus $\mu$ is a homomorphism. 
On the other hand, let $t \in \PODI_n$. If $t\in\POI_n$ then $t=t1=(t,1)\mu$, otherwise  
$th\in\POI_n$ and $t=(th)h=(th,h)\mu$. Hence $\mu$ is surjective. 
\end{proof}  

Observe that, clearly, $\mu$ also separates idempotents,  
i.e. the restriction of $\mu$ to the set of the idempotents of $\POI_n\se \mathcal{C}_2$ is an injective function. 

The next result follows immediately from Theorem \ref{ps1PODIn}. 

\begin{corollary}\label{pv1PODIn}
$\pv{PODI}\subseteq\pv{POI}\se\pv{Ab_2}$. 
\end{corollary}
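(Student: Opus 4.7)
The plan is to deduce the inclusion directly from Theorem \ref{ps1PODIn} using only the closure properties that define pseudovarieties. First I would unpack the definitions: $\pv{PODI}$ is, by construction, the smallest pseudovariety containing every $\PODI_n$, and $\pv{POI}\se\pv{Ab_2}$ is, by the definition recalled in the preliminaries, the pseudovariety generated by the monoidal semidirect products $M\se N$ with $M\in\pv{POI}$ and $N\in\pv{Ab_2}$. So it suffices to check that each $\PODI_n$ sits in $\pv{POI}\se\pv{Ab_2}$.

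For each fixed $n\geq 3$, note that $\POI_n\in\pv{POI}$ and $\mathcal{C}_2\in\pv{Ab_2}$ by definition of these generating families. I would then verify that the action $\delta$ constructed just before Theorem \ref{ps1PODIn} is monoidal: both $1\la s = s$ and $x\la 1 = xx = 1$ hold, which the paper has already checked. This guarantees that $\POI_n\se\mathcal{C}_2$ is a \emph{monoidal} semidirect product and therefore belongs to $\pv{POI}\se\pv{Ab_2}$. By Theorem \ref{ps1PODIn}, $\PODI_n$ is a homomorphic image of this monoid, and since pseudovarieties are closed under homomorphic images, $\PODI_n\in\pv{POI}\se\pv{Ab_2}$.

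Finally, letting $n$ range over $\N$ and invoking minimality of the pseudovariety generated by the family $\{\PODI_n\mid n\in\N\}$, I would conclude $\pv{PODI}\subseteq\pv{POI}\se\pv{Ab_2}$. There is no real obstacle here; the only thing worth flagging is the monoidality of $\delta$, which is what justifies using the generating family of monoidal semidirect products rather than a larger class. The entire argument is essentially a one-line consequence of Theorem \ref{ps1PODIn} combined with the standard closure axioms of pseudovarieties.
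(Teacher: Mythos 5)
Your proposal is correct and is exactly the argument the paper intends: the paper states the corollary follows immediately from Theorem \ref{ps1PODIn}, and your unpacking (each $\POI_n\se\mathcal{C}_2$ is a monoidal semidirect product of a member of $\pv{POI}$ with a member of $\pv{Ab_2}$, $\PODI_n$ is its homomorphic image, and pseudovarieties are closed under homomorphic images) is the standard justification. No issues.
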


On the other hand, we also have: 

\begin{lemma}\label{pv2PODIn}
$\POI_n\se\mathcal{C}_2\in\pv{PODI}$.
\end{lemma}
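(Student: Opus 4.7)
The plan is to embed $\POI_n\se\mathcal{C}_2$ as a submonoid of a monoid that already belongs to $\pv{PODI}$. The natural choice is the direct product $\PODI_n\times\mathcal{C}_2$: since $\PODI_n\in\pv{PODI}$ by definition and $\mathcal{C}_2=\{1,h\}$ is a two-element submonoid of $\PODI_n$ (so $\mathcal{C}_2\in\pv{PODI}$), closure of pseudovarieties under direct products gives $\PODI_n\times\mathcal{C}_2\in\pv{PODI}$, and closure under submonoids will then deliver the conclusion.

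The candidate embedding is
$$
\Psi:\POI_n\se\mathcal{C}_2\longrightarrow\PODI_n\times\mathcal{C}_2,\qquad (s,x)\longmapsto(sx,x).
$$
To verify that $\Psi$ is a monoid homomorphism I would unfold the semidirect-product multiplication $(s,x)(r,y)=(s(x\la r),xy)=(sxrx,xy)$ and then use the relation $x^2=1$ in $\mathcal{C}_2$ to compute
$$
\Psi((s,x)(r,y))=(sxrx\cdot xy,\,xy)=(sxry,\,xy)=(sx\cdot ry,\,xy)=\Psi(s,x)\Psi(r,y),
$$
and of course $\Psi(1,1)=(1,1)$. For injectivity, suppose $\Psi(s,x)=\Psi(r,y)$; then immediately $x=y$ and $sx=rx$ in $\PODI_n$. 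Since $x\in\{1,h\}$ is a permutation of $X_n$, right multiplication by $x$ is a bijection of $\PODI_n$, and multiplying $sx=rx$ on the right by $x$ (using $x^2=1$) gives $s=r$.

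There is no real obstacle here: the whole argument hinges on the happy coincidence that the twist $x\la r=xrx$ built into the semidirect-product multiplication is ``straightened'' by the extra factor of $x$ carried on the right of the first coordinate of $\Psi$, and this straightening works precisely because $x^2=1$ in $\mathcal{C}_2$. A small point worth noting is that this also shows $\Psi$ separates idempotents, mirroring the same property of the homomorphism $\mu$ from Theorem \ref{ps1PODIn}.
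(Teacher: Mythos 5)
Your proposal is correct and is exactly the paper's argument: the paper also proves this lemma by exhibiting the map $(s,x)\mapsto(sx,x)$ into $\PODI_n\times\mathcal{C}_2$ as an injective homomorphism, leaving the verification (which you carry out correctly using $x^2=1$ and the invertibility of $x$) as routine.
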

\begin{proof}
It is easy to show that the function 
$$
\begin{array}{ccc}
\POI_n\se \mathcal{C}_2 &\longrightarrow&\PODI_n\times\mathcal{C}_2\\
(s,x)&\longmapsto&(sx,x)
\end{array}
$$ 
is an injective homomorphism.
\end{proof}

Supported by this result, we formulate the following conjecture: 

\begin{conjecture}
 $\pv{PODI}=\pv{POI}\se\pv{Ab_2}$.
\end{conjecture}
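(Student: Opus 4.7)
Since Corollary \ref{pv1PODIn} yields $\pv{PODI}\subseteq\pv{POI}\se\pv{Ab_2}$, the plan is to prove the reverse inclusion $\pv{POI}\se\pv{Ab_2}\subseteq\pv{PODI}$. Because $\pv{PODI}$ is closed under finite direct products, submonoids, and homomorphic images, and $\pv{POI}\se\pv{Ab_2}$ is by definition generated by the monoidal semidirect products $M\se G$ with $M\in\pv{POI}$ and $G\in\pv{Ab_2}$, it suffices to show that each such $M\se G$ divides some $\PODI_N$. Every $G\in\pv{Ab_2}$ is an elementary abelian $2$-group $\mathcal{C}_2^k$, and every $M\in\pv{POI}$ divides a direct product of copies of $\POI_n$'s.

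The core analytic step concerns the monoidal left action $\delta\colon\mathcal{C}_2^k\to\mathcal{T}(M)$. Since $\mathcal{C}_2^k$ is a group and $\delta$ an anti-homomorphism, combined with monoidality and (SCR), each $\delta_x$ must be a monoid automorphism of $M$. The crucial sublemma I would establish is that $\mathrm{Aut}(\POI_n)=\langle\sigma_h\rangle\cong\mathcal{C}_2$, where $\sigma_h(s)=hsh$: this should follow from the rigidity of $\POI_n$, as any automorphism permutes the rank-$1$ idempotents $\binom{i}{i}$ while preserving the chain order they inherit from the $\mathscr{J}$-structure on rank-$1$ elements, forcing the induced permutation of $X_n$ to be either the identity or the reversal.

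Granting the sublemma, the action $\delta$ on $\POI_n$ factors through a homomorphism $\phi\colon\mathcal{C}_2^k\to\mathcal{C}_2$; choosing a complement of $\ker\phi$, available because $\mathcal{C}_2^k$ is elementary abelian, and exploiting that the kernel acts trivially, a direct computation gives an isomorphism
\[
\POI_n\se\mathcal{C}_2^k\;\cong\;(\POI_n\se\mathcal{C}_2)\times\ker\phi
\]
(or $\POI_n\times\mathcal{C}_2^k$ when $\phi$ is trivial). The first factor is in $\pv{PODI}$ by Lemma \ref{pv2PODIn}, and the second is because $\mathcal{C}_2\hookrightarrow\PODI_2$, so the product lies in $\pv{PODI}$. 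To handle a general $M\se G$ where $M$ is only a divisor of some $\POI_n^a$, I would replace $M$ by the smallest $\mathcal{C}_2^k$-invariant submonoid of $\POI_n^a$ containing its preimage, and then run the same action analysis coordinatewise.

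The main obstacle is the automorphism sublemma $\mathrm{Aut}(\POI_n)\cong\mathcal{C}_2$, together with its extension to arbitrary $M\in\pv{POI}$: plausible and consistent with existing rigidity results for monoids of order-preserving transformations, but requiring careful verification, particularly that an automorphism is determined by its action on the rank-$1$ idempotents. A secondary difficulty is the reduction of a general $M\se G$ to the form $\POI_n\se\mathcal{C}_2^k$, since the $G$-action on $M$ does not automatically lift through a division. Should this reduction resist the direct approach, a fall-back is to invoke the wreath-product characterization of $\pv{POI}\se\pv{Ab_2}$ and prove instead that $\POI_n\wr\mathcal{C}_2^k\in\pv{PODI}$, for example by constructing a faithful representation on $2^kn$ coordinated chains that encodes the $\mathcal{C}_2^k$-action via order-reversals on disjoint subchains.
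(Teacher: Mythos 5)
The statement you are proving is left as an open \emph{conjecture} in the paper: the authors establish only the inclusion $\pv{PODI}\subseteq\pv{POI}\se\pv{Ab_2}$ (Corollary \ref{pv1PODIn}) and the single membership $\POI_n\se\mathcal{C}_2\in\pv{PODI}$ (Lemma \ref{pv2PODIn}), which is precisely the evidence you correctly identify as the starting point. So there is no proof in the paper to compare against, and the question is whether your outline closes the gap. It does not. The decisive difficulty is your reduction of a general generator $M\se G$ to the case $\POI_n\se\mathcal{C}_2$. The pseudovariety $\pv{POI}\se\pv{Ab_2}$ is generated by semidirect products $M\se G$ where $M$ ranges over \emph{all} of $\pv{POI}$, not just the monoids $\POI_n$; such an $M$ is merely a divisor of some $\POI_{n_1}\times\cdots\times\POI_{n_a}$, its automorphism group can be much larger than $\mathcal{C}_2$ (already $\POI_n\times\POI_n$ admits the factor swap), and a $G$-action on a divisor does not lift through the division to an action on the ambient product. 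Your proposed repair (``the smallest $\mathcal{C}_2^k$-invariant submonoid of $\POI_n^a$ containing its preimage'') presupposes exactly such a lift and is therefore circular. The standard way around this is the wreath-product generation of $\pv{V}\se\pv{W}$, i.e.\ your fallback, but then you must show $\POI_n\wr\mathcal{C}_2^k=\POI_n^{2^k}\se\mathcal{C}_2^k\in\pv{PODI}$, and the representation you sketch cannot work as described: an element of $\PODI_N$ is \emph{globally} monotone on $X_N$, so it cannot reverse one subchain while fixing another, and a direct product of $\PODI$'s cannot realize the coordinate permutations present in the wreath product. This is the actual content of the conjecture, and it remains untouched.

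Two further points on the automorphism sublemma. First, even granting $\mathrm{Aut}(\POI_n)\cong\mathcal{C}_2$ (which is plausible and in the spirit of known rigidity results for these monoids), it only controls actions on $\POI_n$ itself, so it does not feed into the general case for the reason above. Second, your sketch of its proof is flawed as stated: the rank-one elements of $\POI_n$ form a single $\mathscr{J}$-class (indeed a single $\mathscr{D}$-class), so the rank-one idempotents $\binom{i}{i}$ inherit no chain order from the $\mathscr{J}$-structure; to recover the linear order on $X_n$ from the abstract monoid you would have to use higher-rank elements (for instance, which rank-two order-preserving elements connect a given pair of rank-one idempotents), and this is where the real work of the rigidity argument lies. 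In summary, your overall strategy (upgrade Lemma \ref{pv2PODIn} to the reverse inclusion by analysing the possible actions) is the natural one, but both of its pillars are unproved and the first, the reduction to $\POI_n\se\mathcal{C}_2$, fails in the form given.
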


\smallskip 

Notice that, since $\mathcal{C}_2$ is a commutative monoid, the left action of $\mathcal{C}_2$ on $\POI_n$ may also be considered as a right action. Furthermore, similar results to Theorem \ref{ps1PODIn} and Corollary \ref{pv1PODIn} (and Lemma \ref{pv2PODIn}) also hold for reverse semidirect products.  

\medskip 

We finish this section by establishing the analogous result to Theorem \ref{ps1PODIn} for the monoid $\DP_n$. This aim will be accomplish by noticing that $\DP_n$ is a submonoid of $\PODI_n$ that fits in the general framework described below. 

\smallskip 

Let $S$ be a monoid and let $S_1$ and $S_2$ be two submonoids of $S$. 
Let $\delta$ be a left action of $S_2$ on $S_1$
such that the function  
$$
\begin{array}{cccc}
\mu:&S_1\se S_2&\longrightarrow&S\\
&(s,u)&\mapsto&su
\end{array}
$$
is a homomorphism. 
Let $T$ be a submonoid of $S$, $T_1$ a submonoid of $S_1$ and $T_2$ a submonoid of $S_2$. 
It is a routine matter to check that, 
if $(s)(u)\delta \in T_1$, for all $s\in T_1$ and $u\in T_2$, then $\delta$ induces a (restriction) 
left action of $T_2$ on $T_1$ and the corresponding semidirect product $T_1\se T_2$ is a submonoid of $S_1\se S_2$. If, in addition, 
$T=T_1T_2$ then 
$$
\begin{array}{cccc}
\mu{|_{T_1\rtimes T_2}}:&T_1\se T_2&\longrightarrow&T\\
&(s,u)&\mapsto&su
\end{array}
$$
is a surjective homomorphism.

\smallskip 

For $s\in \ODP_n$ and $x\in \mathcal{C}_2$, it is clear that $x\la s=xsx \in \ODP_n$. Thus, we may consider the semidirect product $\ODP_n\se \mathcal{C}_2$ induced by the left action $\delta$ of 
$\mathcal{C}_2$ on $\POI_n$. Moreover, since $\DP_n=\ODP_n\mathcal{C}_2$, then 
$\mu{|_{\ODP_n\rtimes\mathcal{C}_2}}:\ODP_n\rtimes\mathcal{C}_2\longrightarrow\DP_n$ 
is a surjective homomorphism and so we have: 

\begin{theorem}\label{psDPn}
The monoid $\DP_n$ is a homomorphic image of $\ODP_n\se \mathcal{C}_2$.
\end{theorem}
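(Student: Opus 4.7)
The plan is to apply the general framework established in the paragraph immediately preceding the theorem, taking $S=\PODI_n$ with submonoids $S_1=\POI_n$ and $S_2=\mathcal{C}_2$, together with the left action $\delta$ of $\mathcal{C}_2$ on $\POI_n$ constructed for Theorem \ref{ps1PODIn}. The role of the smaller submonoids in the framework will be played by $T=\DP_n$, $T_1=\ODP_n$ and $T_2=\mathcal{C}_2$. Since the homomorphism $\mu\colon\POI_n\se\mathcal{C}_2\to\PODI_n$ is already available from Theorem \ref{ps1PODIn}, only the two hypotheses of the framework need to be checked: that $\delta$ restricts to a left action of $\mathcal{C}_2$ on $\ODP_n$, and that $\DP_n=\ODP_n\mathcal{C}_2$.

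First I would verify that $x\la s=xsx\in\ODP_n$ for all $s\in\ODP_n$ and $x\in\mathcal{C}_2$. This is trivial for $x=1$; for $x=h$, since $h\in\DP_n$, the composition $hsh$ is a partial isometry (being a composite of partial isometries), and since $h$ is order-reversing while $s$ is order-preserving, the two occurrences of $h$ flip the order twice, leaving $hsh$ order-preserving. Hence $\delta$ restricts to a monoidal left action of $\mathcal{C}_2$ on $\ODP_n$, and the resulting semidirect product $\ODP_n\se\mathcal{C}_2$ sits as a submonoid of $\POI_n\se\mathcal{C}_2$.

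Second I would verify that $\DP_n=\ODP_n\cdot\mathcal{C}_2$. Every partial isometry of the chain $X_n$ is necessarily monotone, so each $t\in\DP_n$ is either order-preserving, in which case $t\in\ODP_n$ and $t=t\cdot 1$, or order-reversing, in which case $th$ is order-preserving (as $h$ is order-reversing) and is still a partial isometry (as a product of partial isometries), so $th\in\ODP_n$ and $t=(th)\cdot h\in\ODP_n\mathcal{C}_2$.

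With these two facts in hand, the general framework gives directly that
$$\mu|_{\ODP_n\se\mathcal{C}_2}\colon\ODP_n\se\mathcal{C}_2\longrightarrow\DP_n$$
is a surjective homomorphism, which is the desired conclusion. The argument is essentially bookkeeping; the only substantive content is the verification that conjugation by $h$ preserves both the isometry and order-preservation conditions, and that every order-reversing element of $\DP_n$ is obtained from $\ODP_n$ by right-multiplication by $h$. Consequently no real obstacle is expected.
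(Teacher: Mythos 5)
Your proposal is correct and follows exactly the route the paper takes: invoking the general restriction framework stated just before the theorem with $S=\PODI_n$, $S_1=\POI_n$, $S_2=\mathcal{C}_2$, $T=\DP_n$, $T_1=\ODP_n$, $T_2=\mathcal{C}_2$, and checking that $xsx\in\ODP_n$ and that $\DP_n=\ODP_n\mathcal{C}_2$. The only difference is that you spell out the two verifications which the paper leaves as ``it is clear,'' which is harmless.
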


%%%%%%%%%%%%%%%%%%%%%%%%%%%%%%%%%%%%%%%%%%%%%%%%%%%%%%%%%%%%%

\lastpage 


\begin{thebibliography}{00}

\bibitem{Aizenstat:1962}
A.Ya. A\u{\i}zen\v{s}tat, 
The defining relations of the endomorphism semigroup of a finite linearly ordered set, 
Sibirsk. Mat.  3 (1962), 161--169 (Russian). 

\bibitem{AlKharousi&Kehinde&Umar:2014} 
F. Al-Kharousi, R. Kehinde and A. Umar,
Combinatorial results for certain semigroups of partial isometries of a finite chain, 
Australas. J. Combin. 58(3) (2014), 365--375.

\bibitem{AlKharousi&Kehinde&Umar:2014s} 
F. Al-Kharousi, R. Kehinde and A. Umar,
On the semigroup of partial isometries of a finite chain, 
Communications in Algebra. To appear. 

\bibitem{Ash:1987}
C.J. Ash, 
Finite semigroups with commuting idempotents, 
J. Austral.  Math. Soc. Ser. A 43 (1987) 81--90. 

\bibitem{Cowan&Reilly:1995} 
D.F. Cowan and N.R. Reilly, 
Partial cross-sections of symmetric inverse semigroups, 
Int. J. Algebra Comput. 5 (1995) 259--287.

\bibitem{Delgado&Fernandes:2000}
M. Delgado and V.H. Fernandes,
Abelian kernels of some monoids of injective partial transformations and an application,
Semigroup Forum 61 (2000) 435--452.

\bibitem{Delgado&Fernandes:2004}
M. Delgado and V.H. Fernandes, 
Abelian kernels of monoids of order-preserving maps and of some of its extensions,
Semigroup Forum 68 (2004) 335--356. 

\bibitem{Derech:1991} 
V.D. Derech, 
On quasi-orders over certain inverse semigroups, 
Sov. Math. 35 (1991) No.3 74--76; 
translation from Izv. Vyssh. Uchebn. Zaved., 
Mat. 1991 (1991) No.3 (346) 76--78.

\bibitem{Fernandes:1997}
V.H. Fernandes, 
Semigroups of order-preserving mappings on a finite chain: a new class of divisors, 
Semigroup Forum 54 (1997) 230--236. 

\bibitem{Fernandes:1998} 
V.H. Fernandes, 
Normally ordered inverse semigoups, 
Semigroup Forum 58 (1998) 418--433. 

\bibitem{Fernandes:2001}
V.H. Fernandes, 
The monoid of all injective order preserving partial transformations on a finite chain, 
Semigroup Forum 62 (2001) 178-204. 

\bibitem{Fernandes:2002}
V.H. Fernandes, 
Semigroups of order-preserving mappings on a finite chain: another class of divisors, 
Izvestiya VUZ. Matematika 3 (478) (2002) 51--59 (Russian).
English translation in: Russ. Math. Izv. VUZ {\bf 46} No.3 47--55 (2002)

\bibitem{Fernandes:2008}
V.H. Fernandes,   
Normally ordered semigroups, 
Glasg. Math. J. 50 (2008) 325--333.

\bibitem{Fernandes&Gomes&Jesus:2004}
V.H. Fernandes, G.M.S. Gomes and M.M. Jesus, 
Presentations for some monoids of injective partial transformations on a finite chain, 
Southeast Asian Bull. Math. 28 (2004) no. 5 903--918.

\bibitem{Fernandes&Quinteiro:SF2011}
V.H. Fernandes and T.M. Quinteiro,
Bilateral semidirect product decompositions of transformation monoids, 
Semigroup Forum 82 (2) (2011) 171--187. 

\bibitem{Fernandes&Quinteiro:2011}
V.H. Fernandes and T.M. Quinteiro,
On the monoids of transformations that preserve the order and a uniform partition, Comm. Algebra 39 (8) (2011) 2798--2815. 

\bibitem{Fernandes&Quinteiro:2014sub}
V.H. Fernandes and T.M. Quinteiro, 
Presentations for monoids of finite partial isometries, submitted. 

\bibitem{Fernandes&Volkov:2010} 
V.H. Fernandes and M. V. Volkov, 
On divisors of semigroups of order-preserving mappings of a finite chain, 
Semigroup Forum 81 (2010) 551--554.

\bibitem{Ganyushkin&Mazorchuk:2003} 
O. Ganyushkin and V. Mazorchuk, 
On the structure of $IO_n$, 
Semigroup Forum 66 (3) (2003) 455--483.

\bibitem{Gomes&Howie:1992}
G.M.S. Gomes and J.M. Howie, 
On the ranks of certain semigroups of order-preserving transformations, 
Semigroup Forum  45 (1992) 272--282.

\bibitem{Higgins:1995}
P.M. Higgins, 
Divisors of semigroups of order-preserving mappings on a finite chain, 
Int. J. Algebra Comput. 5 (1995) 725--742.  

\bibitem{Higgins:1999}
P.M. Higgins, 
Pseudovarieties generated by classes of transformation semigroups, 
Proc. St. Petersburgh Semigroup Conference Russian State Hydrometeorological Inst.  (1999) 85--94.

\bibitem{Howie:1971}
J.M. Howie, 
Product of idempotents in certain semigroups of transformations, 
Proc. Edinburgh Math. Soc. 17 (1971) 223--236. 

\bibitem{Howie:1995} 
J.M. Howie, 
Fundamentals of Semigroup Theory, 
Oxford, Oxford University Press, 1995.

\bibitem{Kunze:1983}
M. Kunze, 
Zappa products, 
Acta Math. Hungar. 41 (1983) 225--239.

\bibitem{Kunze:1984a}
M. Kunze, 
Lineare Parallelrechner I [Linear parallel processing machines I],
Elektron. Informationsverarb. Kybernet.  20  (1984) 9--39 (German). 

\bibitem{Kunze:1984b}
M. Kunze, 
Lineare Parallelrechner II [Linear parallel processing machines II], 
Elektron. Informationsverarb. Kybernet.  20 (1984) 111--147 (German). 

\bibitem{Kunze:1992}
M. Kunze, 
Bilateral semidirect products of transformation semigroups, 
Semigroup Forum 45 (1992) 166--182. 

\bibitem{Kunze:1993}
M. Kunze, 
Standard automata and semidirect products of transformation semigroups, 
Theoret. Comput. Sci. 108  (1993) 151--171.

\bibitem{Laradji&Umar:2004}
A. Laradji and A. Umar,
Combinatorial results for semigroups of order-preserving partial transformations, 
J. Algebra 278 (2004) No. 1 342--359.

\bibitem{Laradji&Umar:2006}
A. Laradji and A. Umar,
Combinatorial results for semigroups of order-preserving full transformations, 
Semigroup Forum 72 (2006) No. 1 51--62. 

\bibitem{Lavers:1998}
T.G. Lavers, 
Presentations of general products of monoids, 
J. Algebra 204 (1998) 733--741. 

\bibitem{Pin:1986}
J.-E. Pin, 
Varieties of Formal Languages, 
Plenum, London, 1986. 

\bibitem{Popova:1962}
L.M. Popova, 
The defining relations of the semigroup of partial endomorphisms of a finite linearly ordered set, 
Leningradskij gosudarstvennyj pedagogicheskij institut imeni A. I. Gerzena, 
Uchenye Zapiski 238 (1962) 78--88 (Russian). 

\bibitem{Rhodes&Tilson:1989}
J. Rhodes and B. Tilson, 
The kernel of monoid morphisms, 
J. Pure Appl. Algebra 62 (1989) 227--268. 

\bibitem{Vernitskii&Volkov:1995}
A.S. Vernitski\u{\i} and M.V. Volkov, 
A proof and generalisation of Higgins' division theorem for semigroups of order-preserving mappings, 
Izvestiya VUZ. Matematika No.1 (1995) 38--44. (Russian).
English translation in: Russ. Math. Izv. VUZ {\bf 39} No.1 34--39 (1995)

%\bibitem{Fernandes&al:2009sub}{V.H. Fernandes, M.M. Jesus, V. Maltcev and J.D. Mitchell,} 
%{\it Endomorphisms of the semigroup of order-preserving mappings}, {\it submitted}.  

%\bibitem{Aizenstat:1962b}{A.Ya. A\u{\i}zen\v{s}tat,}
%    {\it Homomorphisms of semigroups of endomorphisms of ordered sets,}
%    {Uch. Zap. Leningr. Gos. Pedagog.  Inst. {\bf 238} (1962) 38--48 (Russian).}
%
%\bibitem{Almeida&Weil:1998}{J. Almeida and P. Weil,}
%    {\it Profinite categories and semidirect products,}
%    {J. Pure Appl. Algebra {\bf 123} (1998), 1--50.}
%
%\bibitem{Arthur&Ruskuc:2000}{R.E. Arthur and N. Ru\v{s}kuc,}
%    {\it Presentations for two extensions of the monoid of
%    order-preserving mappings on a finite chain,}
%    {Southeast Asian Bull. Math. {\bf 24} (2000), 1--7.}
%
%\bibitem{Catarino:1998}{P.M. Catarino,}
%    {\it Monoids of orientation-preserving
%    transformations of a finite chain and their presentation,}
%    {Semigroups and Applications,}
%    {eds. J.M. Howie \& N. Ru\v skuc,}
%    {World Scientific (1998), 39--46.}
%
%\bibitem{Catarino&Higgins:1999}{P.M. Catarino and P.M. Higgins,}
%    {\it The monoid of orientation-preserving mappings on a chain,}
%    {Semigroup Forum {\bf 58} (1999), 190--206.}
%
%\bibitem{Fernandes:2002a}{V.H. Fernandes,}
%    {\it Presentations for some monoids of partial transformations
%    on a finite chain: a survey,}
%    {Semigroups, Algorithms, Automata and Languages,}
%    {eds. Gracinda M. S. Gomes \& Jean-\'Eric Pin \& Pedro V. Silva,}
%    {World Scientific, (2002), 363--378.}
%
%\bibitem{Fernandes&Gomes&Jesus:2005}{V.H. Fernandes, G.M.S. Gomes and M.M. Jesus,}
%    {\it Presentations for some monoids of partial transformations on a finite chain,}
%    {Comm. Algebra {\bf 33} (2005), 587--604.}
%
%\bibitem{FJMM:s2009}{V.H. Fernandes, M.M. Jesus, V. Maltcev and J.D. Mitchell,}
%    {\it Endomorphisms of the semigroup of order-preserving mappings,}
%    {Semigroup Forum {\bf 81} (2010), 277--285.}
%

%\bibitem{McAlister:1998}{D.B. McAlister,}
%    {\it Semigroups generated by a group and an idempotent,}
%    {Comm. Algebra {\bf 26} (1998), 515--547.}

%\bibitem{Solomon:1996}{A. Solomon,}
%    {\it Catalan monoids, monoids of local endomorphisms, and their presentations,}
%    {Semigroup Forum {\bf 53} (1996), 351--368.}
    
%\bibitem{VHF4}{\rm V.H. Fernandes, }
%    {\it The monoid of all injective
%        orientation preserving partial transformations on a finite chain, }
%    {\rm Comm. Algebra}{ {\bf 28} (2000), 3401-3426.}

%\bibitem{Fernandes:2002survey}
%V.H. Fernandes, 
%Presentations for some monoids of partial transformations on a finite chain: a survey, 
%Semigroups, Algorithms, Automata and Languages, 
%eds. Gracinda M. S. Gomes \& Jean-\'Eric Pin \& Pedro V. Silva, 
%World Scientific (2002) 363--378.

%\bibitem{Fernandes:2001b}{V.H. Fernandes, }
%  {\it A division theorem for the pseudovariety generated by
%    semigroups of orientation preserving transformations on a finite
%    chain, } {\rm Comm. Algebra}{ {\bf 29} (2001) 451--456.}

%\bibitem{GAP4}
%The GAP~Group, 
%GAP -- Groups, Algorithms, and Programming, v. 4.7.5,  
%2014, %\verb+(http://www.gap-system.org)+.
%\texttt{(http://www.gap-system.org)}. 

%\bibitem{Gould:1972} 
%H.W. Gould, 
%Combinatorial identities,  
%%. A standardized set of tables listing 500 binomial coefficient summations, Rev. ed., 
%Morgantown, W. Va., 1972.

%\bibitem{Lallement:1979}
%G. Lallement,
%Semigroups and Combinatorial Applications,
%John Wiley \& Sons,  1979.

%\bibitem{Ruskuc:1995}
%N. Ru\v{s}kuc, 
%Semigroup Presentations, 
%Ph. D. Thesis, University of St-Andrews, 1995.

\end{thebibliography}
\end{document}